\documentclass[12pt,reqno]{amsart}
\linespread{1.025}
\usepackage{times}
\usepackage{amsmath,amssymb,amsthm,mathtools,enumerate}
\usepackage{datetime,graphicx}
\usepackage[usenames,dvipsnames]{color}
\usepackage{dsfont}
\usepackage{hyperref}
\hypersetup{pdfcreationdate=\pdfdate,pdfmoddate=\pdfdate,colorlinks=true,citecolor=BlueViolet,linkcolor=BlueViolet,urlcolor=BlueViolet}
\usepackage[margin=1in]{geometry}

\numberwithin{equation}{section}

\newtheorem{theorem}[equation]{Theorem}
\newtheorem{prop}[equation]{Proposition}

\newtheorem*{mainthm}{Main Theorem}
\newtheorem{lemma}[equation]{Lemma}
\newtheorem{cor}[equation]{Corollary}
\theoremstyle{definition}
\newtheorem{definition}[equation]{Definition}
\newtheorem{example}[equation]{Example}

\def\bysame{\leavevmode\hbox to3em{\hrulefill}\thinspace}


\newcommand{\abs}[1]{\left\lvert#1\right\rvert}

\renewcommand{\Re}{\operatorname{Re}}
\renewcommand{\Im}{\operatorname{Im}}

\newcommand{\nuP}{\boldsymbol{\nu}_{_{\!P\!}}}
\newcommand{\CTP}{\mathcal{T}_{_{\!P\!}}}
\newcommand{\UCTP}{\mathds{T}_{_{\!P\!}}}
\newcommand{\UCTO}{\mathds{T}_{_{0}}}


\DeclareMathOperator{\Lip}{\operatorname{Lip}}

\DeclareMathOperator{\dist}{\operatorname{dist}}


\DeclareMathOperator{\RTyp}{\Delta_{\text{reg}}}

\DeclareMathOperator{\ord}{\operatorname{ord}}

\begin{document}

\title{Tangential Lipschitz gain for holomorphic functions on domains of finite type}
\author[R. Sivaguru]{Sivaguru Ravisankar}
\thanks{Research partially supported by The James D. Wolfensohn Fund.}
\address{Department of Mathematics, Oklahoma State University, Stillwater, Oklahoma 74078.}
\email{\href{mailto:sivagr@okstate.edu}{\nolinkurl{sivagr@okstate.edu}}}
\subjclass[2010]{32A37, 32F32, 26A16}
\date{\usdate\today}

\begin{abstract}
Functions that are holomorphic and Lipschitz in a smoothly bounded domain enjoy a gain in the order of Lipschitz regularity in the complex tangential directions near the boundary. We describe this gain explicitly in terms of the defining function near points of finite type in the boundary.
\end{abstract}

\maketitle

\section{Introduction}

The remarkable phenomenon of tangential Lipschitz gain for holomorphic functions was discovered by E. Stein \cite{Ste73}.
We begin by recalling his foundational result which states that a Lipschitz-$\alpha$ holomorphic function is Lipshchitz-$2\alpha$ along complex tangential curves near the boundary.

Let $\Omega$ be a smoothly bounded domain (i.e., bounded domain with $C^\infty$ boundary) in $\mathbb{C}^n$, $n>1$.
For $P\in b\Omega$, let $\CTP(b\Omega)$ denote the complex tangent space to $b\Omega$ at $P$.
Let $\mathcal{O}(\Omega)$ denote the set of holomorphic functions defined in $\Omega$.
For a domain $D$ in Euclidean space and 
$0<\alpha<1$, let $\Lip_\alpha(D)$ denote the Lipschitz (or H\"{o}lder) space of order $\alpha$. i.e., 
\[\Lip_\alpha(D) = \left\{f:D\rightarrow \mathbb{C}\,:\, \exists\,C_f>0, \abs{f(x)-f(y)} \le C_f \cdot \abs{x-y}^\alpha \text{ for } x,y\in D\right\}.\]

\begin{theorem}[Stein \cite{Ste73}] Suppose $f\in\mathcal{O}(\Omega)\cap C(\overline{\Omega})$.
If $f\in\Lip_\alpha(\Omega)$ for some $0<\alpha < 1/2$, then $\exists\, C>0$ such that, for any normalized  complex tangential curve $\gamma :  [0,1] \rightarrow \Omega$ and $s,t\in[0,1]$, we have
\[\abs{f\circ\gamma(s) - f\circ\gamma(t)}\le C\cdot\abs{s-t}^{2\alpha}.\]
A curve $\gamma$ is called normalized complex tangential if $\abs{\gamma'(t)} \le 1$ and $\gamma'(t) \in \mathcal{T}_{\pi(\gamma(t))}(b\Omega)$ where $\pi$ is a normal projection from a neighbourhood of $b\Omega$ to $b\Omega$.
\end{theorem}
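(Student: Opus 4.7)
The plan is to exploit the geometric fact that complex tangential directions have second-order contact with the boundary: for $P\in b\Omega$ and a unit complex tangential vector $v$ at $P$, the condition $\partial\rho|_P(v)=0$ yields $\rho(P+\zeta v)=O(|\zeta|^2)$, where $\rho$ is a smooth defining function for $\Omega$. Translated to an interior point $z\in\Omega$ with $\delta(z):=\dist(z,b\Omega)$ and $v\in\mathcal{T}_{\pi(z)}(b\Omega)$, this gives $\rho(z+\zeta v)=-\delta(z)+O(\delta|\zeta|)+O(|\zeta|^2)$, so the holomorphic disk $\{z+\zeta v:|\zeta|\le c\sqrt{\delta(z)}\}$ lies inside $\Omega$ for a sufficiently small universal constant $c>0$. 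The $\sqrt{\cdot}$-scaling (in contrast to the isotropic $\delta(z)$-scaling valid for the normal direction) is the geometric source of the gain.

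First, I would apply the Cauchy integral formula to $f$ on the tangential disk above and use Lip-$\alpha$ to bound the integrand, obtaining the pointwise estimate
\[
|\partial_v f(z)|\le C\,\delta(z)^{(\alpha-1)/2}\qquad\bigl(z\in\Omega,\ v\in\mathcal{T}_{\pi(z)}(b\Omega),\ |v|=1\bigr).
\]
Next, I would integrate along $\gamma$ using the chain rule $\tfrac{d}{d\tau}(f\circ\gamma)(\tau)=\partial_{\gamma'(\tau)}f(\gamma(\tau))$, which is valid since $\gamma'(\tau)\in\mathcal{T}_{\pi(\gamma(\tau))}(b\Omega)$. Because $\gamma'$ is tangential, $\tfrac{d}{d\tau}\delta(\gamma(\tau))=O(\delta(\gamma(\tau)))$, so $\delta(\gamma(\tau))$ stays comparable to $\delta_0:=\delta(\gamma(s))$ over $[s,t]$, and the pointwise estimate gives $|f\circ\gamma(s)-f\circ\gamma(t)|\le C\,h\,\delta_0^{(\alpha-1)/2}$ with $h=|s-t|$.

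The main obstacle is that this single-pass bound matches the target $Ch^{2\alpha}$ only when $\delta_0$ is bounded below by a power of $h$. To handle all $\delta_0$, I would translate $\gamma$ inward by an amount $\eta$ (at Lip-$\alpha$ cost $C\eta^\alpha$) and apply the tangential derivative bound at the deeper curve, producing a total estimate $C(\eta^\alpha+h\,\eta^{(\alpha-1)/2})$; optimizing over $\eta$ yields only the exponent $2\alpha/(\alpha+1)<2\alpha$. The residual gap is closed by a bootstrap: once a tangential Lip-$\gamma$ bound has been established (for some $\gamma\in[\alpha,2\alpha)$), rerunning the same argument with Lip-$\gamma$ substituted in the Cauchy step improves the exponent via the map $\gamma\mapsto 2\alpha/(2\alpha-\gamma+1)$, whose attractive fixed point is precisely $2\alpha$ — and this is exactly where the hypothesis $\alpha<1/2$ enters, guaranteeing attractivity. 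Iterating from $\gamma_0=\alpha$, the sequence of exponents converges to $2\alpha$, and a closure argument that controls the implied constants uniformly at each stage delivers the desired Lip-$2\alpha$ bound with a finite constant.
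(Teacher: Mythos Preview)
The paper does not supply its own proof of this statement; it is quoted from \cite{Ste73} as background. The natural comparison is with the proof of the Main Theorem in Section~\ref{sec:MainThm}, which in the special case $k_0=2$ (so $S(t)\approx t^2$, $R(t)\approx t^{1/2}$) yields exactly the Lip-$\alpha$ to Lip-$2\alpha$ gain on straight tangential discs.

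Your pointwise bound $|\partial_v f(z)|\lesssim \delta(z)^{(\alpha-1)/2}$ and your diagnosis that a single push-in/optimize pass produces only the exponent $2\alpha/(\alpha+1)$ are correct. The bootstrap, however, has a real gap. What the $n$-th stage gives you is Lip-$\gamma_n$ along \emph{normalized complex tangential curves}, i.e.\ curves whose tangent lies in $\mathcal T_{\pi(\gamma(\tau))}(b\Omega)$ at \emph{every} $\tau$. To ``substitute Lip-$\gamma_n$ in the Cauchy step'' you need $|f(z+\zeta v)-f(z)|\le C|\zeta|^{\gamma_n}$ on the circle $|\zeta|=c\,\delta(z)^{1/2}$, that is, Lip-$\gamma_n$ along the straight rays $\tau\mapsto z+\tau e^{i\theta}v$. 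These rays are \emph{not} normalized complex tangential: $v$ lies in $\mathcal T_{\pi(z)}(b\Omega)$ but generally not in $\mathcal T_{\pi(z+\tau e^{i\theta}v)}(b\Omega)$. Hence the inductive hypothesis does not feed back into the Cauchy estimate as written, and the recursion does not close. One cannot simply replace the ray by a normalized tangential curve from $z$ to $z+\zeta v$ either: those two points lie on different level sets of $\delta$, and even after a short normal correction there is no general mechanism (absent finite-type hypotheses) for joining two nearby points on a level set by a complex tangential curve of comparable length, since the complex tangential distribution may fail to be bracket-generating.

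The paper's route for its Main Theorem avoids any iteration. After pushing in by $h=S(|P_\delta-w|)$ (here $h\approx|P_\delta-w|^2$), it does not try to estimate $\partial_{\boldsymbol v}f$ on the inner segment by a first-order Cauchy bound. Instead it bounds the \emph{mixed second derivative} via Lemma~\ref{lemma:DerivLipHolo}\,(\ref{lemma:DerivLipHoloB}),
\[
\left|\frac{\partial^2 f}{\partial\nuP\,\partial\boldsymbol v}\right|\ \lesssim\ \frac{(h+t)^{\alpha-1}}{R(h+t)}\ \approx\ (h+t)^{\alpha-3/2}
\]
at depth $h+t$, and integrates this in $t$ from $0$ to a fixed $\delta_0$. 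Because $\alpha<1/2$ the integral is dominated near $t=0$ and yields $|\partial_{\boldsymbol v}f|\lesssim h^{\alpha-1/2}$ on the inner segment in one stroke; multiplying by the segment length $|P_\delta-w|=R(h)\approx h^{1/2}$ gives $h^\alpha\approx|P_\delta-w|^{2\alpha}$ directly. This second-derivative-then-normal-integration step is the missing idea: it converts the $\sqrt{\delta}$ tangential-disc scaling into the sharp exponent without any bootstrap, and it is also what makes the hypothesis $\alpha<1/2$ (more generally $\alpha<1/k_0$) appear naturally as an integrability condition rather than as a contraction rate.
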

This result is a striking example of the deep interplay between the function theory of holomorphic functions and boundary geometry 
in several complex variables. The key geometric fact driving this result is the following: for a point $z\in\Omega$ near 
$b\Omega$, a complex disc centred at  $z$ with radius $\approx{\dist_{b\Omega}(z)}^{1/2}$ can be fit inside the domain in any 
complex tangential direction.
This translates to a better estimate for the rate of growth of the complex tangential derivative of Lipschitz 
holomorphic functions which in turn results in a gain in Lipschitz regularity in these directions.
The Hardy-Littlewood theorem (Theorem \ref{thm:H-L}) provides the clearest indication of the close connection that is shared between the Lipschitz regularity of a holomorphic function and the rate of growth of its derivative.

We should note that the tangential Lipschitz gain phenomenon is local in nature and the Lipschitz gain reflects the local boundary geometry near a point on the boundary.
Also, the restriction of $\alpha < 1/2$ in Stein's result is primarily to avoid the technicalities of dealing with Lipschitz spaces of order 1 or greater.
For more on such matters see the expository article by Krantz \cite{Kra83}.

Rudin \cite{Rud78} showed a similar gain of Lipschitz-$\alpha$ to Lipschitz-$2\alpha$ in the unit ball under weaker hypothesis and Krantz \cite{Kra80} has generalized Rudin's result suitably to show the same gain on any smoothly bounded domain.
These results that show a gain from Lipschitz-$\alpha$ to Lipschitz-$2\alpha$ are sharp only for strongly pseudoconvex domains.
A better gain is expected in the weakly pseudoconvex tangential directions since discs of larger relative radii can be fit in these directions and this translates to an even better estimate of the complex tangential derivatives in these directions.
Exploring this aspect of the tangential Lipschitz gain phenomenon near points of finite type is the central theme of this paper.
We will show that the gain is related to the non-isotropic nature of the complex geometry of the boundary measured by the relative size of discs that can be fit in the complex tangential directions. Since holomorphic functions extend holomorphically past boundary points that are not pseudoconvex, it is natural to restrict our attention to pseudoconvex domains. Furthermore, we will study one complex tangential direction at a time and formulate the Lipschitz gain along complex discs in such a direction.

Results that show a Lipschitz gain better than Lipschitz-$\alpha$ to Lipschitz-$2\alpha$ are known for finite type domains in $\mathbb{C}^2$ and convex domains of finite type in any dimension.
Krantz \cite{Kra90} has formulated this gain on any smoothly bounded domain using an Eisenman-Kobayashi like metric and volume. 
Chang and Krantz \cite{Cha-Kra91} have obtained an effective version of this result on pseudoconvex domains of finite type in $\mathbb{C}^2$ showing that the gain is determined by the type of the boundary point, i.e., a gain of Lipschitz-$\alpha$ to Lipschitz-$k\alpha$ near a point of type $k$.
This can be achieved either by using the estimates of invariant metrics on such domains by Catlin \cite{Cat89} or by using the works of Nagel, Stein, and Wainger \cite{NSW81}, McNeal \cite{McN89}, and Nagel, Rosay, Stein, and Wainger \cite{NRSW89}.
McNeal and Stein \cite{McN-Ste94} have formulated this gain on convex domains of finite type in terms of  the size of certain natural polydiscs. These polydiscs, constructed by 
McNeal \cite{McN94}, precisely capture the non-isotropic nature of the boundary of convex domains.
Our result is in a similar vein but we consider domains of finite type in any dimension that are not necessarily convex. 
We need the following definitions to state our main result.

For $P\in b\Omega$, let $\nuP$ denote the outward unit normal of $b\Omega$ at $P$.
A real valued function $r$ is a defining function of $\Omega$ near $P$ if it is defined in a neighbourhood $U$ of $P$ and it satisfies the following conditions:
\[\Omega\cap U = \left\{z\in\mathbb{C}^n : r(z) < 0\right\},\ b\Omega\cap U = \left\{z\in\mathbb{C}^n : r(z) = 0\right\}, \text{ and } \nabla r \neq 0 \text{ on } b\Omega.\]
Let $\UCTP(b\Omega)$ denote the unit vectors in the complex tangent space $\CTP(b\Omega)$.
\begin{definition}\label{defn:RS}
For $P\in b\Omega$, a defining function $r$ of $\Omega$ near $P$, $\boldsymbol{v}\in\UCTP(b\Omega)$, small $t>0$, and a choice of coordinates near $P$, we define the following.
\begin{align*}
P_t &:= P - t\nuP,\\
R(t) & := \sup\left\{ \rho > 0 \,:\, P_t + \zeta\boldsymbol{v} \in \Omega\cap U \text{ for } \zeta\in\mathbb{C}, \abs{\zeta} \le \rho  \right\},\\ 
\mathbb{D}(t) & := \left\{P_t + \zeta\boldsymbol{v} \,:\, \zeta\in\mathbb{C}, \abs{\zeta} <  {R}(t)  \right\}, \text{ and }\\ 
S(t) & := \sup\left\{r\left(P+\zeta\boldsymbol{v}\right) \,:\, \zeta\in\mathbb{C},\, \abs{\zeta}\:\le\: t\right\}. &\phantom{\hspace{\linewidth}}
\end{align*}
\vspace*{-5.25\baselineskip}
\begin{figure}[h!]
\flushright
\begin{minipage}[t]{8.02cm} 
\centering
\includegraphics[height=2.6385cm]{./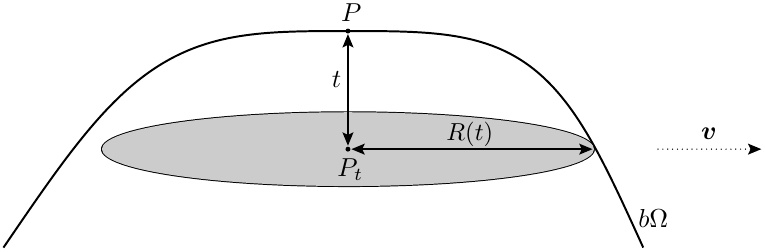}
\caption{Function $R$ and Disc $\mathbb{D}$.}
\end{minipage}
\end{figure}
\end{definition}

A few remarks are in order. First, note that, in the interest of readability, we have suppressed the possible dependence of $R$, $\mathbb{D}$, and $S$ on $P$, $\boldsymbol{v}$, the choice of coordinates, and the defining function in our choice of notation.
However, we will highlight and comment on this dependence in our estimates and results whenever it is relevant. Second, $S(t) \le({const})\cdot t^2$ for any pseudoconvex domain $\Omega$, $P\in b\Omega$, and $\boldsymbol{v}\in \UCTP(b\Omega)$.
But, the function $S$ satisfies a possibly better estimate when $\boldsymbol{v}$ corresponds to a weakly pseudoconvex direction and $b\Omega$ has a finite order of `holomorphic flatness' at $P$ (see  Proposition \ref{prop:RS_Props}). i.e., $S(t) \le({const})\cdot t^{k}$, for some even integer $k$ that is less than or equal to this order of `holomorphic flatness'.
In such a setting, $S$ is also the inverse function of $R$.

We use the notion of type introduced by D'Angelo \cite{D'A82} to measure the `holomorphic flatness' of $b\Omega$ at a boundary point.
The type of a point $P\in b\Omega$, denoted by $\Delta(b\Omega,P)$, is the maximum order of contact of holomorphic curves with $b\Omega$ at $P$ (see Section \ref{subsec:FinTyp} for the definition). 
A related construct, the regular type, denoted by $\RTyp(b\Omega,P)$, is the maximum order of contact of non-singular holomorphic curves with $b\Omega$ at $P$. We are now ready to state our main theorem.

\begin{mainthm} Let $\Omega$ be a smoothly bounded pseudoconvex domain in $\mathbb{C}^n$, $n>1$, and let $P\in b\Omega$ be a point of finite regular type.
If $f\in\mathcal{O}(\Omega)\cap\Lip_\alpha(\Omega)$ for $0 < \alpha < 1/\RTyp\left(b\Omega, P\right)$, then $\exists\, C>0$ such that 
\[\abs{f\!\left(P_\delta\right) - f(w)} \, \le\, C\cdot S(\abs{P_\delta-w})^\alpha\]
for $v\in\mathds{T}_{P}(b\Omega)$, small $\delta>0$, and $w\in\mathbb{D}(\delta)$.
\end{mainthm}
The constant $C$  depends on the local geometry of $b\Omega$ near $P$ and can be chosen to be independent of $\boldsymbol v$ and the choice of coordinates.
The only dependence of $C$ on $f$ is on the Lipschitz-$\alpha$ norm of $f$.
Furthermore, if the regular type of $b\Omega$ is bounded in a neighbourhood of $P$, we can choose $C$ to be independent of $P$ as well. Notice however that $S$ depends on $P$, $\boldsymbol v$, the choice of coordinates, and the defining function.
The dependence of $S$ on the defining function is a red herring since the quotient of two smooth defining functions is positive near $b\Omega$.

D'Angelo also showed that finite type is an open condition, with an explicit bound, whereas finite regular type need not be. This gives us the following succinct corollary. We should bear in mind that the result holds locally near a point of finite type as well. 
\begin{cor} Let $\Omega$ be a smoothly bounded pseudoconvex domain of finite type in $\mathbb{C}^n$, $n>1$.
Then, there exists $\alpha_0, C > 0$ such that, if $f\in\mathcal{O}(\Omega)\cap\Lip_\alpha(\Omega)$ for $0 < \alpha < \alpha_0$, then 
\[\abs{f\!\left(P_\delta\right) - f(w)} \, \le\, C\cdot S(\abs{P_\delta-w})^\alpha\]
for $P\in b\Omega$, ${\boldsymbol v}\in\UCTP(b\Omega)$, small $\delta>0$,  and $w\in\mathbb{D}(\delta)$.
\end{cor}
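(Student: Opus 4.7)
The plan is to deduce the corollary from the Main Theorem by using D'Angelo's openness of finite type to replace the pointwise hypothesis ``$P$ has finite regular type'' by a uniform statement valid at every boundary point. The only subtle point is that the Main Theorem is phrased in terms of regular type, which is not itself an open invariant; this is harmless because $\RTyp(b\Omega, Q) \le \Delta(b\Omega, Q)$ always, and $\Delta$ does enjoy an explicit openness bound.

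First I would use D'Angelo's openness theorem together with compactness of $b\Omega$ to produce an integer $K$ with $\Delta(b\Omega, Q) \le K$ for every $Q \in b\Omega$. The hypothesis that $\Omega$ is of finite type gives $\Delta(b\Omega, Q) < \infty$ at every $Q$, D'Angelo's explicit openness bound propagates this pointwise finiteness to a uniform bound on a neighbourhood of each $Q$, and a standard compactness argument then yields a global $K$. Since regular type is dominated by D'Angelo type, one automatically has $\RTyp(b\Omega, Q) \le \Delta(b\Omega, Q) \le K$ throughout $b\Omega$.

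With this uniform bound in hand, set $\alpha_0 := 1/K$. For any $P \in b\Omega$, any $\boldsymbol v \in \UCTP(b\Omega)$, and any choice of coordinates near $P$, the inequality $0 < \alpha < \alpha_0 \le 1/\RTyp(b\Omega, P)$ places us in the setting of the Main Theorem, which yields the desired pointwise estimate at $P$. Since regular type is uniformly bounded on the neighbourhood of $P$ picked out by D'Angelo's theorem, the remark following the Main Theorem allows the constant $C$ to be chosen independently of $P$, $\boldsymbol v$, and the coordinates on that neighbourhood; covering the compact boundary $b\Omega$ by finitely many such neighbourhoods and taking the maximum of the resulting constants produces a single global $C$. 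I do not anticipate a serious obstacle: the proof is essentially a bookkeeping combination of the Main Theorem with D'Angelo's openness, and the only pitfall is that one must dominate $\RTyp$ by $\Delta$ rather than hope for openness of regular type directly.
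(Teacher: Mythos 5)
Your proposal is correct and follows essentially the same route the paper intends: the paper deduces the corollary from the Main Theorem precisely by invoking D'Angelo's openness theorem (with its explicit bound) and compactness of $b\Omega$ to bound the type, hence the regular type, uniformly, and then uses the remark after the Main Theorem to make the constant independent of $P$, $\boldsymbol v$, and the coordinates. Your explicit observation that $\RTyp(b\Omega,Q)\le\Delta(b\Omega,Q)$ is exactly the (implicit) bridge the paper relies on, so there is nothing to add.
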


We can generalize this result further by using a result of the author \cite{Rav13} which states that a harmonic function that is Lipschitz along a family of transversal curves near the boundary is Lipschitz in all directions. We state this as a corollary and it is analogous to a result of Krantz \cite{Kra80}.

\begin{cor}Let $\Omega$ be a smoothly bounded pseudoconvex domain of finite type in $\mathbb{C}^n$, $n>1$.
Then, there exists $\alpha_0, C>0$ such that, if $f$ is a bounded holomorphic function in $\Omega$ and uniformly Lipschitz-$\alpha$ along a family of curves transversal to $b\Omega$, for $0 < \alpha < \alpha_0$, then 
\[\abs{f\!\left(P_\delta\right) - f(w)} \, \le\, C\cdot S(\abs{P_\delta-w})^\alpha\]
for $P\in b\Omega$, ${\boldsymbol v}\in\UCTP(b\Omega)$, small $\delta>0$,  and $w\in\mathbb{D}(\delta)$.
\end{cor}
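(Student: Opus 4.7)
The plan is to reduce this corollary directly to the preceding one by promoting the hypothesis ``$f$ bounded holomorphic with uniform Lipschitz-$\alpha$ control along a family of transversal curves'' to the unconditional statement $f \in \Lip_\alpha(\Omega)$. Once this upgrade is in place, the preceding corollary applies verbatim with the same $\alpha_0$ and a $C$ that absorbs the constants from the upgrade.

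The key ingredient near the boundary is the author's result from \cite{Rav13}. Since $f$ is holomorphic, both $\Re f$ and $\Im f$ are harmonic in $\Omega$, and they inherit the uniform transversal Lipschitz-$\alpha$ control from $f$. The quoted result then promotes this to Lipschitz-$\alpha$ regularity in all directions in some one-sided tubular neighbourhood $\Omega \cap U$ of $b\Omega$, with a constant depending only on the local geometry of $b\Omega$ and the original transversal Lipschitz norm. To extend the estimate to the ``deep interior'' $\{z\in\Omega : \dist(z,b\Omega) \ge c\}$ for a suitable $c > 0$, Cauchy's estimates applied to the bounded holomorphic function $f$ yield a uniform bound on $|\nabla f|$, hence on the Lipschitz-$\alpha$ seminorm there. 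Stitching the two estimates together along straight-line segments (or along a chain of Euclidean balls lying inside $\Omega$) then produces $f \in \Lip_\alpha(\Omega)$, with norm controlled by $\|f\|_\infty$ and the transversal Lipschitz norm.

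With this in hand, applying the preceding corollary immediately yields the desired inequality. The only substantive step is the transversal-to-isotropic Lipschitz upgrade near $b\Omega$, which is the content of \cite{Rav13} and is used here as a black box; the remainder is routine Cauchy-estimate bookkeeping together with the conclusion already established in the preceding corollary. Uniformity of $\alpha_0$ and $C$ in $P$ and $\boldsymbol{v}$ is inherited from the preceding corollary, together with the uniform bounds on the local geometry afforded by the finite-type hypothesis and D'Angelo's openness of finite type.
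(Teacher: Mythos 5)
Your proposal is correct and matches the paper's intended argument: the paper derives this corollary precisely by invoking the result of \cite{Rav13} (transversally Lipschitz harmonic functions are Lipschitz, applied to the harmonic real and imaginary parts of the bounded holomorphic $f$) to upgrade to $f\in\Lip_\alpha(\Omega)$, and then applying the preceding corollary. Your extra remarks on the interior Cauchy estimates and the uniformity of the constants are consistent with what the paper leaves implicit.
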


The article is organized as follows. Some interesting examples that highlight the features and limitations of our result are presented in Section \ref{sec:Examples}.
We collect some key results concerning Lipschitz functions and facts about finite type domains in Section \ref{sec:Prelims}. Section \ref{sec:MainThm} is devoted to stating and proving the main result of this article.  We shall use $A\,\lesssim\,B$ or $B\,\gtrsim\,A$ to mean $A \le 
C\cdot B$ for some constant $C>0$ which is independent of certain parameters. It will be mentioned, or clear from the context, what these parameters are. We use $A \approx B$ to mean $A\,\lesssim\,B$ and $B\,\lesssim\,A$.

\section{Examples}\label{sec:Examples}
Let us begin by using our result to recover a version of Stein's original result of Lipschitz-$\alpha$ to Lipschitz-$2\alpha$ gain for holomorphic functions.
\begin{example}[Pseudoconvex Domains] Let $\Omega$ be a smoothly bounded psuedoconvex domain. 
Then, $S(t) \,\lesssim\, t^2$ for any $P\in b\Omega$ and ${\boldsymbol v}\in\UCTP(b\Omega)$.
Notice that this ignores any information about the weakly pseudoconvex directions that $b\Omega$ may have at $P$. Hence, if $f\in\mathcal{O}(\Omega)\cap\Lip_{\alpha}(\Omega)$, for some $0<\alpha<1/2$, then
\[\abs{f(P_\delta) - f(w)} \,\lesssim\, \abs{P_\delta - w}^{2\alpha}\]
for $P\in b\Omega$, ${\boldsymbol v}\in\UCTP(b\Omega)$, small $\delta>0$,  and $w\in\mathbb{D}(\delta)$.
\end{example}

\begin{example}[Herbort's Domain] Herbort \cite{Her83} constructed the following domain in $\mathbb{C}^3$ to show that the Bergman kernel function need not grow like a rational power of the distance to the boundary function.
Our results have been inspired, in large part, by trying to gain a deeper understanding of the tangential Lipschitz gain phenomenon on this domain.
This domain $\Omega$ is defined near the origin by
\[r(z) = \Re z_3 + \abs{z_1}^6 + \abs{z_1z_2}^2 + \abs{z_2}^6.\]
It is a finite type domain but, crucially, is not convex. The Lipschitz gain on discs centred below the origin is given by
\[S(t) = \abs{v_1}^6 t^6 + \abs{v_1v_2}^2 t^4 +\abs{v_2}^6 t^6,\]
for ${\boldsymbol v}=(v_1,v_2,0)\in \UCTO(b\Omega)$. If $f\in\mathcal{O}(\Omega)\cap\Lip_\alpha(\Omega)$, for $0<\alpha<1/6$, then
\[\abs{f(0,0,-\delta) - f(\zeta v_1, \zeta v_2,-\delta)} \,\lesssim\, \left(\abs{\zeta}^6\abs{v_1}^6 + \abs{\zeta}^4\abs{v_1v_2}^2 + \abs{\zeta}^6\abs{v_2}^6\right)^\alpha\]
for small $\delta > 0$, ${\boldsymbol v}\in\UCTO(b\Omega)$, and $(\zeta v_1, \zeta v_2,-\delta)\in\mathbb{D}(\delta)$.

Note that $\Delta(0,b\Omega) = 6$ and we get the expected gain of Lipschitz-$\alpha$ to Lipschitz-$6\alpha$ in the $(1,0,0)$ and $(0,1,0)$ directions but we only get a gain of Lipschitz-$\alpha$ to Lipschitz-$4\alpha$ in the $(1,1,0)$ direction!

Since finite type is an open condition, the Lipschitz gain on discs centred below a point $P\in b\Omega$, close to $0$, in a direction ${\boldsymbol v}\in\UCTP(b\Omega)$ is given by, a different function, $S$ based at $P$ which doesn't have as nice an expression as above.
\end{example}

Lastly, let us examine a domain, constructed by D'Angelo \cite{D'A82}, that highlights the subtlety between type and regular type and also underscores the limitations of our result.

\begin{example}\label{eg:Type} Consider the domain $\Omega$ defined near the origin by
\[r(z) = \Re z_3 +\abs{z_1^2-z_2^3}^2.\]
This domain has the property that $0\in b\Omega$ is a point of finite regular type but not a point of finite type.
Furthermore, there are points arbitrarily close to the origin that have infinite regular type showing, in particular, that finite regular type is not an open condition. For more on this, see Section \ref{subsec:FinTyp}.

On this domain our result applies only to discs centred below the origin, on which we get a gain of Lipschitz-$\alpha$ to Lipschitz-$6\alpha$, but does not apply even to boundary points arbitrarily close to the origin. We wish to extend our understanding of the Lipschitz gain phenomenon further so as to enable us to describe it in a full neighbourhood of the origin on this domain.
\end{example}

\section{Preliminaries}\label{sec:Prelims}

\subsection{Lipschitz Functions}

The following theorem of Hardy and Littlewood provides an insightful characterization of when a harmonic function is Lipschitz in terms of the rate of growth of its derivative.

\begin{theorem}[Hardy-Littlewood]\label{thm:H-L}Let $\Omega$ be a smoothly bounded domain in $\mathbb{R}^m$.  Let $U$ be a neighbourhood of $b\Omega$ and $0<\alpha<1$. Suppose $u$ is harmonic and bounded in $\Omega$. Then, $u\in\Lip_\alpha(\Omega)$ if and only if $u$ satisfies
\begin{equation}\label{eqn:HL_Ineq}\abs{\nabla u(x)} \,\lesssim\, \dist_{b\Omega}(x)^{-1+\alpha}, \ \text{ for }\ x \in U\cap \Omega.\end{equation}
\end{theorem}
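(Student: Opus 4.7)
The plan is to establish the equivalence by treating the two implications separately, using classical interior estimates for harmonic functions in one direction and a careful path integration in the other.

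For the forward direction, assume $u \in \Lip_\alpha(\Omega)$. Fix $x \in U \cap \Omega$ and set $\delta = \dist_{b\Omega}(x)$. The ball $B(x, \delta/2)$ lies inside $\Omega$, and the function $v(y) = u(y) - u(x)$ is harmonic there. I would then invoke the standard interior gradient estimate,
\[|\nabla v(x)| \,\lesssim\, \frac{1}{\delta} \sup_{y \in B(x,\delta/2)} |v(y)|,\]
and bound the supremum using the Lipschitz hypothesis: $|v(y)| = |u(y) - u(x)| \lesssim |y-x|^\alpha \lesssim \delta^\alpha$. Since $\nabla u = \nabla v$, this gives $|\nabla u(x)| \lesssim \delta^{-1+\alpha}$, as required.

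For the reverse direction, fix $x, y \in \Omega$. The case $|x-y| \gtrsim 1$ is immediate from the boundedness of $u$, so I may assume $|x-y|$ is small. Write $\delta(z) := \dist_{b\Omega}(z)$ and assume without loss of generality $\delta(x) \le \delta(y)$. I would split into two cases. First, if $|x-y| \le \delta(x)/2$, then the segment from $x$ to $y$ lies in $\Omega$ at distance $\ge \delta(x)/2$ from $b\Omega$, so integrating the hypothesis along the segment and using $\delta(x) \ge 2|x-y|$ yields
\[|u(x) - u(y)| \,\lesssim\, |y-x| \cdot \delta(x)^{-1+\alpha} \,\lesssim\, |x-y|^\alpha.\]
Second, if $|x-y| > \delta(x)/2$, then both $x$ and $y$ lie within a distance of order $|x-y|$ of $b\Omega$. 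I would exploit the tubular neighbourhood structure of the smoothly bounded domain: push $x$ and $y$ inward along the inward normals (using the normal projection $\pi$ from the excerpt) to points $x', y'$ with $\delta(x'), \delta(y') \approx |x-y|$. Integrating the hypothesis along these normal segments contributes
\[|u(x) - u(x')| \,\le\, \int_{\delta(x)}^{c|x-y|} t^{-1+\alpha} \, dt \,\lesssim\, |x-y|^\alpha,\]
and similarly for $y$. Finally, $x'$ and $y'$ lie within a distance of order $|x-y|$ of each other at uniform distance $\approx |x-y|$ from $b\Omega$, so they can be connected by a short path in $\Omega$ obtained as a parallel offset of the short boundary arc between $\pi(x)$ and $\pi(y)$; a final integration then gives $|u(x') - u(y')| \lesssim |x-y|^\alpha$, and the triangle inequality assembles the bound.

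The main technical obstacle is in this second case, specifically the construction of the connecting path from $x'$ to $y'$: its length must be of order $|x-y|$ while staying at distance $\gtrsim |x-y|$ from $b\Omega$. This is standard for smoothly bounded domains via the tubular neighbourhood theorem when $|x-y|$ is sufficiently small, but requires care to ensure the implicit constants depend only on the local geometry of $b\Omega$ and not on the pair $(x,y)$. Once this geometric lemma is in place, the rest of the argument reduces to elementary one-variable integrations.
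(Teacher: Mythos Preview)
Your proposal is correct and follows the classical Hardy--Littlewood argument: interior gradient estimates for harmonic functions give the forward direction, and a ``box''-type path integration (push inward along normals, then connect inside the tubular neighbourhood) gives the converse. The paper itself does not supply a proof of this theorem at all; it simply cites \cite[Theorem~2.4 and Lemma~2.6]{Rav13} and records the dependence of the constant on $\|u\|_\alpha$. So there is nothing to compare against beyond noting that the referenced proof in \cite{Rav13} proceeds along essentially the same lines you describe. One small point worth tightening: in your second case you also need the gradient bound at points bounded away from $b\Omega$ (outside $U$), which follows from interior estimates and the boundedness of $u$ rather than from the hypothesis \eqref{eqn:HL_Ineq}; you implicitly use this when integrating along the connecting path at depth $\approx |x-y|$ once $|x-y|$ is not tiny.
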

\begin{proof} See \cite[Theorem 2.4 and Lemma 2.6]{Rav13}. Furthermore, when $u$ is harmonic and Lipschitz, the constant in the inequality above can be chosen so that its only dependence on $u$ is on $\left\| u\right\|_\alpha$\footnote{$\displaystyle\left\| u\right\|_{\alpha} = \left\|u\right\|_\infty + \sup\limits_{\substack{x,y\in\Omega\\ x\ne y}}\, \dfrac{\abs{u(x) - u(y)}}{\abs{x-y}^\alpha}.$}.
\qed\end{proof}

The following lemma shows that the derivatives of a Lipschitz holomorphic function can be estimated in terms of the size of certain discs that can be fit inside the domain.
In conjunction with the Hardy-Littlewood theorem, these estimates are at the very heart of the tangential Lipschitz gain phenomenon and provide a bridge between boundary geometry and function theory of Lipschitz holomorphic functions.

\begin{lemma}\label{lemma:DerivLipHolo} Let $\Omega$ be a smoothly bounded domain in $\mathbb{C}^n$, $n>1$, and $f\in\mathcal{O}(\Omega)\cap\Lip_\alpha(\Omega)$ for some $0<\alpha<1$. Suppose $\boldsymbol{n}$ and $\boldsymbol{v}$ are unit vectors in $\mathbb{C}^n$ and $z\in\Omega$. Then, we have the following.
\begin{enumerate}[(i)]\itemsep5mm
   \item\label{lemma:DerivLipHoloA} If $\left\{z+\zeta\boldsymbol{n} \, :\, \zeta\in\mathbb{C}, \abs{\zeta} \le \delta\right\} \subset \Omega$, for some $\delta>0$, then 
   \[\abs{\frac{\partial f}{\partial \boldsymbol{n}} (z)}\, \le\, \left\|f\right\|_{\alpha} \cdot\delta^{-1+\alpha}.\]
   \item\label{lemma:DerivLipHoloB} If $\left\{z+\zeta\boldsymbol{n} + \tau\boldsymbol{v} \, :\, \zeta,\tau\in\mathbb{C},\, \abs{\zeta} \le \delta, \abs{\tau} \le r\right\} \subset \Omega$, for some $\delta,r>0$, then 
\[\abs{\frac{\partial^2 f}{\partial \boldsymbol{n}\, \partial\boldsymbol{v}} (z)}\, \le\, \left\|f\right\|_{\alpha}\cdot r^{-1}\cdot\delta^{-1+\alpha}.\]
\end{enumerate}
\end{lemma}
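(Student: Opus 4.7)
The lemma is a Cauchy-estimate improvement that replaces the naive $\|f\|_\infty/\delta$ bound with a $\|f\|_\alpha \cdot \delta^{-1+\alpha}$ bound, exploiting the Lipschitz-$\alpha$ regularity by subtracting off the value of $f$ at the centre before integrating. This is the standard trick that underlies the Hardy-Littlewood theorem on the circle.

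For part \eqref{lemma:DerivLipHoloA}, I would write the Cauchy integral for the first derivative along the one-dimensional complex slice $\zeta \mapsto f(z + \zeta \boldsymbol{n})$ on the disc $|\zeta| \le \delta$, which fits inside $\Omega$ by hypothesis, and subtract the constant $f(z)$, which integrates to $0$ against $\zeta^{-2}$:
\[
\frac{\partial f}{\partial \boldsymbol{n}}(z) \;=\; \frac{1}{2\pi i}\oint_{|\zeta|=\delta} \frac{f(z+\zeta\boldsymbol{n}) - f(z)}{\zeta^2}\, d\zeta.
\]
On the circle $|\zeta|=\delta$ one has $|f(z+\zeta\boldsymbol{n}) - f(z)| \le \|f\|_\alpha \delta^\alpha$, so the standard $ML$-estimate yields the claim.

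For part \eqref{lemma:DerivLipHoloB}, I would iterate: fix $\tau\in\mathbb{C}$ with $|\tau|\le r$ and apply part \eqref{lemma:DerivLipHoloA} at the shifted centre $z + \tau\boldsymbol{v}$, which is legitimate because the polydisc hypothesis guarantees that the disc $\{z + \tau\boldsymbol{v} + \zeta\boldsymbol{n} : |\zeta| \le \delta\}$ lies in $\Omega$. This gives a uniform bound
\[
\bigl|g(\tau)\bigr| \;\le\; \|f\|_\alpha \cdot \delta^{-1+\alpha}, \qquad g(\tau) := \frac{\partial f}{\partial \boldsymbol{n}}(z + \tau \boldsymbol{v}),
\]
for all $|\tau| \le r$. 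Then $g$ is holomorphic in $\tau$ on $|\tau| < r$, and a single Cauchy estimate for $g'(0) = \partial_{\boldsymbol v}\partial_{\boldsymbol n} f(z)$ on the circle $|\tau| = r$ supplies the extra factor of $r^{-1}$.

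There is no real obstacle here; the only point requiring minor care is to check that the polydisc hypothesis in (ii) really does imply, for each individual $\tau$ in the closed disc $|\tau|\le r$, the one-dimensional disc hypothesis of (i) at the point $z+\tau\boldsymbol{v}$—which is immediate from the product form of the polydisc. The proof proceeds entirely by Cauchy's formula with the subtract-the-constant trick in the $\boldsymbol{n}$-slice, followed by an ordinary Cauchy estimate in the $\boldsymbol{v}$-slice.
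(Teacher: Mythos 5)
Your proof is correct and follows essentially the same route as the paper: part (i) by the Cauchy integral for the derivative on the slice $\zeta\mapsto f(z+\zeta\boldsymbol{n})$ with the constant $f(z)$ subtracted, and part (ii) by viewing $\tau\mapsto(\partial f/\partial\boldsymbol{n})(z+\tau\boldsymbol{v})$ as holomorphic, bounding it uniformly by (i) on $|\tau|\le r$, and applying a Cauchy estimate on $|\tau|=r$ (which is exactly the paper's integral representation of the mixed derivative). No gaps.
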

\begin{proof} These are direct consequences of Cauchy's integral formula. Without loss of generality let us suppose that $\left\| f\right\|_\alpha \le 1$.
\begin{enumerate}[(i)]
\item\begin{minipage}[t]{\linewidth}\vspace*{-1\baselineskip}
\begin{align*}
\frac{\partial f}{\partial \boldsymbol{n}} (z) &= \frac{1}{2\pi i}\int\limits_{\abs{\zeta} = \delta}\, \frac{f(z+\zeta\boldsymbol{n})}{\zeta^2}\, \text{d}\zeta = \frac{1}{2\pi i}\int\limits_{\abs{\zeta} = \delta}\, \frac{f(z+\zeta\boldsymbol{n})-f(z)}{\zeta^2}\, \text{d}\zeta\\[3mm]
\abs{\frac{\partial f}{\partial \boldsymbol{n}} (z)} &\le \frac{1}{2\pi}\cdot \frac{\delta^\alpha}{\delta^2}\cdot 2\pi\delta = \delta^{-1+\alpha}.\\[5mm]
\end{align*}
\end{minipage}
\item \qquad$\displaystyle \frac{\partial^2 f}{\partial \boldsymbol{n}\, \partial\boldsymbol{v}} (z) = \frac{1}{2\pi i}\int\limits_{\abs{\tau} = r}\, \frac{1}{\tau^2}\cdot\frac{\partial f}{\partial \boldsymbol{n}}(z+\tau\boldsymbol{v})\, \text{d}\tau .$\\[2mm]
For $\abs{\tau}=r$, we can fit a disc of radius $\delta$ in the $\boldsymbol{n}$ direction centred at $z+\tau\boldsymbol{v}$ contained in $\Omega$. Hence, by \textit{(i)}, $\abs{\left(\partial f / \partial \boldsymbol{n}\right)(z+\tau\boldsymbol{v})} \le \delta^{-1+\alpha}$. So, 
\[\abs{\frac{\partial^2 f}{\partial \boldsymbol{n}\, \partial\boldsymbol{v}} (z)} \le \frac{1}{2\pi}\cdot \frac{\delta^{-1+\alpha}}{r^2}\cdot 2\pi r = r^{-1}\cdot\delta^{-1+\alpha}.\]
\end{enumerate}
\qed\end{proof}

We will normally use the above lemma along with the following geometrically evident fact: the distance to the boundary of a point obtained by moving transversally from a boundary point is comparable to the distance moved.
\begin{lemma}\label{lemma:TransDistEst}
Let $\boldsymbol{n}$ be a unit vector that is transverse to $b\Omega$ at $P$. i.e., $ \boldsymbol{n}\cdot\nuP \ne 0$.
We then have
		\[s \,\lesssim\, \dist_{b\Omega}\left(P-s\boldsymbol{n}\right) \le s, \text{ for small } s>0.\]
\end{lemma}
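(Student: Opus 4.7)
The upper bound is immediate from the definition: since $P\in b\Omega$, we have
\[\dist_{b\Omega}(P-s\boldsymbol{n}) \le \abs{(P-s\boldsymbol{n})-P} = s.\]
So the entire content is the lower bound $s\lesssim \dist_{b\Omega}(P-s\boldsymbol{n})$.

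For the lower bound, my plan is to work with a smooth defining function $r$ of $\Omega$ near $P$ and exploit two standard facts: (a) near $b\Omega$, the defining function is comparable to the signed distance, i.e.\ $\abs{r(z)}\approx \dist_{b\Omega}(z)$, which follows from $\nabla r\ne 0$ on $b\Omega$ together with the mean value theorem (and compactness of a neighborhood base of $b\Omega$); (b) since $\nabla r(P)$ is parallel to $\nuP$ and $\boldsymbol{n}\cdot \nuP \ne 0$ by the transversality hypothesis, we have $\nabla r(P)\cdot \boldsymbol{n}\ne 0$.

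With these in hand, I would Taylor expand $r$ along the line $s\mapsto P-s\boldsymbol{n}$ at $s=0$:
\[r(P-s\boldsymbol{n}) \;=\; r(P) - s\,\bigl(\nabla r(P)\cdot\boldsymbol{n}\bigr) + O(s^2) \;=\; -s\,\bigl(\nabla r(P)\cdot\boldsymbol{n}\bigr) + O(s^2),\]
since $r(P)=0$. (Here I treat $\nabla r$ as a real gradient on $\mathbb{R}^{2n}$ and interpret $\boldsymbol{n}\cdot\nuP$ accordingly.) Because $\nabla r(P)\cdot\boldsymbol{n}\ne 0$, the linear term dominates for small $s>0$, so $\abs{r(P-s\boldsymbol{n})} \gtrsim s$ with an implicit constant depending only on $r$ and on $\abs{\boldsymbol{n}\cdot\nuP}$. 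Combining with (a) yields
\[\dist_{b\Omega}(P-s\boldsymbol{n}) \;\gtrsim\; \abs{r(P-s\boldsymbol{n})} \;\gtrsim\; s,\]
which is the desired lower bound.

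There is no real obstacle here; the only thing worth being careful about is the comparability in step (a), which is a routine consequence of the nonvanishing of $\nabla r$ on $b\Omega$: for $z$ close enough to $b\Omega$, the nearest boundary point $\pi(z)$ is well-defined and smooth, and writing $r(z)=r(z)-r(\pi(z))$ and applying the mean value theorem along the segment from $\pi(z)$ to $z$ yields $\abs{r(z)}\le C\,\dist_{b\Omega}(z)$, while the reverse direction follows from the fact that $\abs{\nabla r}$ is bounded below near $b\Omega$.
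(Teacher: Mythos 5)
Your proof is correct. The paper itself does not give an argument for this lemma---it simply cites \cite[Lemma 3.2]{Rav13}---and your write-up is the standard proof that citation stands in for: the upper bound is trivial since $P\in b\Omega$, and the lower bound follows from $\abs{r(z)}\le C\,\dist_{b\Omega}(z)$ (the only direction of the comparability you actually use, and the easy one, via the mean value theorem from a nearest boundary point) together with the first-order Taylor expansion $r(P-s\boldsymbol{n})=-s\,\nabla r(P)\cdot\boldsymbol{n}+O(s^2)$ and $\nabla r(P)\parallel\nuP$. Your parenthetical remark that $\boldsymbol{n}\cdot\nuP$ must be read as the real inner product on $\mathbb{R}^{2n}$ is exactly the right point of care: under a Hermitian reading the hypothesis would admit $\boldsymbol{n}=i\nuP$, for which the conclusion fails, so the real interpretation is the one under which the lemma (and its use in the paper with the constant depending only on $\boldsymbol{n}\cdot\nuP$) is meant.
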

\begin{proof}
See \cite[Lemma 3.2]{Rav13}. The constant in the inequality on the left only depends on $\boldsymbol{n}\cdot\nuP$.
\qed\end{proof}

\subsection{Finite Type}\label{subsec:FinTyp}

The type of a boundary point (the 1-type to be precise), introduced by D'Angelo \cite{D'A82}, is the largest of order of contact of holomorphic curves to the boundary at that point.
It is a quantified measure of the degeneracy of the Levi form at that point.
If the boundary is strongly pseudoconvex at a point, then the type of that point is 2.
The type is greater at points where the boundary is only weakly pseudoconvex.

For a smooth function defined near $0\in\mathbb{C}$, let $\ord(f)$ denote the order of vanishing of $f-f(0)$. For a smooth vector valued function $F=(f_1,\ldots,f_n)$ defined near $0\in\mathbb{C}$, let $\ord(F) = \min\limits_j\, \ord(f_j)$.
Then, the type of $P\in b\Omega$ is defined as follows:
\[\Delta(b\Omega,P) = \sup\limits_F\, \frac{\ord(r\circ F)}{\ord(F)}\]
where the supremum is taken over holomorphic parametrizations $F$ of possibly singular holomorphic curves with $F(0)=P$. The regular type, denoted by $\RTyp(b\Omega,P)$, is defined similarly but the supremum is taken only over non-singular holomorphic curves.
A point $P\in b\Omega$ is said to be of {\it finite type} (respectively {\it finite regular type}) if $\Delta(b\Omega,P) < \infty$ (respectively $\RTyp(b\Omega,P) < \infty$). 

These definitions can be shown to be independent of the choice of defining function.
The subtle difference between the notion of type and regular type is best highlighted by the domain $\Omega$ discussed in Example \ref{eg:Type}. It is defined locally near the origin by the defining function $r(z)=\Re z_3 + \abs{z_1^2-z_2^3}^2$.
The holomorphic curve $\zeta \mapsto (\zeta^3,\zeta^2,0)$ lies in $b\Omega$, passes through the origin and is singular there.
Hence, we have $\Delta (b\Omega, 0) = \infty$.
Furthermore, since this curve is non-singular for $\zeta\neq 0$, $\RTyp(b\Omega, P)=\infty$ for points $P\in b\Omega$ that are arbitrarily close to $0$ that lie on this curve.
But, one can check that $\RTyp(b\Omega, 0)=6$.
This shows that finite regular type need not be an open condition.
The following theorem of D'Angelo \cite[Corollary 5.6]{D'A82} shows that finite type on the other hand {\it is} an open condition and also provides an explicit bound.
\begin{theorem}
Suppose $\Omega$ is a smoothly bounded pseudoconvex domain in $\mathbb{C}^n$ and $Q\in b\Omega$ is a point of finite type. Then, there is a neighbourhood $U$ of $Q$ such that
\[\Delta(b\Omega, P) \le \frac{\big(\Delta(b\Omega, Q)\big)^{n-1}}{2^{n-2}}, \text{ for } P\in U\cap b\Omega.\]
\end{theorem}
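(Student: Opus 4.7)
The plan is to leverage D'Angelo's algebraic reformulation of the type. Place $Q$ at the origin in local holomorphic coordinates, and exploit pseudoconvexity to bring the defining function into the canonical form
\[r = 2\Re h + \sum_{j=1}^N |f_j|^2 + (\text{higher-order terms not affecting the type}),\]
with $h, f_1, \ldots, f_N$ holomorphic germs. For each nearby $P \in b\Omega$, let $I(P) \subset \mathcal{O}_{n,P}$ be the ideal generated by $h - h(P)$ and $f_j - f_j(P)$. The starting point is D'Angelo's basic identification of $\Delta(b\Omega, P)$ (up to a universal factor) with the ideal invariant
\[D(J) := \sup_\phi \frac{\ord(\phi^{*} J)}{\ord(\phi)},\]
where the supremum ranges over non-constant holomorphic germs $\phi:(\mathbb{C},0) \to (\mathbb{C}^n, P)$.

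With this reduction the theorem becomes a purely algebraic statement about the D'Angelo invariant of an ideal of finite codimension. The core inequalities, all due to D'Angelo, are: \textbf{(a)} if $V(J) = \{P\}$ then $D(J) \le \dim_\mathbb{C}(\mathcal{O}_{n,P}/J)$, proved by bounding the intersection multiplicity of a curve $\phi$ realizing the supremum against the divisor of a judicious generator of $J$; and \textbf{(b)} conversely
\[\dim_\mathbb{C}(\mathcal{O}_{n,P}/J) \le \frac{D(J)^{n-1}}{2^{n-2}},\]
the main Bezout-type bound, which selects $n$ generic elements of $J$ whose common zero set is $\{P\}$ and whose individual vanishing orders are controlled by $D(J)$.

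The last ingredient is the classical upper semicontinuity of colength: $P \mapsto \dim_\mathbb{C}(\mathcal{O}_{n,P}/I(P))$ is upper semicontinuous wherever it is finite, by coherence of the quotient sheaf. Since it is finite at $Q$, it is bounded above by $\dim_\mathbb{C}(\mathcal{O}_{n,Q}/I(Q))$ throughout some neighborhood $U$ of $Q$. For $P \in U \cap b\Omega$, assembling the three inputs gives
\[\Delta(b\Omega, P) \le D(I(P)) \le \dim_\mathbb{C}\!\bigl(\mathcal{O}_{n,P}/I(P)\bigr) \le \dim_\mathbb{C}\!\bigl(\mathcal{O}_{n,Q}/I(Q)\bigr) \le \frac{D(I(Q))^{n-1}}{2^{n-2}} \le \frac{\Delta(b\Omega, Q)^{n-1}}{2^{n-2}},\]
yielding the desired bound after tracking the universal constants.

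I expect the main obstacle to be inequality \textbf{(b)} — the $\dim \le D(J)^{n-1}/2^{n-2}$ estimate. One cannot naively replace $J$ by an ideal generated by $n$ elements each of order $\le D(J)$, since $D(J)$ is not directly a ``generator order'' invariant and can exceed the orders of any minimal generating set. D'Angelo circumvents this via a careful generic linear combination argument inside $J$, exploiting the sum-of-squares structure forced by pseudoconvexity to rule out cancellation when computing the relevant intersection multiplicity, and it is precisely that step that is responsible for the explicit factor $2^{n-2}$ in the denominator.
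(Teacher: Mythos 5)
This statement is not proved in the paper at all: it is quoted verbatim from D'Angelo and the paper simply cites \cite[Corollary 5.6]{D'A82}, so the only comparison available is with D'Angelo's original argument, of which your sketch is a rough outline. The outline has the right skeleton (relate $\Delta$ to an ideal-theoretic invariant, use semicontinuity of codimension, and a Bezout-type bound), but two of your key steps are wrong as stated. First, pseudoconvexity does \emph{not} allow the canonical form $r=2\Re h+\sum_j|f_j|^2+(\text{negligible})$: after truncating the Taylor expansion (a step you also need, since $b\Omega$ is only smooth and $h,f_j$ are otherwise not holomorphic germs), the Hermitian decomposition is $r=2\Re h+\|f\|^2-\|g\|^2$, and the negative part is in general present at the relevant order even for pseudoconvex boundaries --- the Kohn--Nirenberg domain $\Re z_2+|z_1|^8+\tfrac{15}{7}|z_1|^2\Re z_1^6$ is the standard example. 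Discarding $-\|g\|^2$ makes your ideal $I(P)$ ill-defined; D'Angelo's device of the whole family of ideals $I(U,P)=\bigl(h, f-Ug\bigr)$ indexed by unitary matrices $U$, with sups taken over $U$ and semicontinuity made uniform over the compact group, is precisely what handles this and is missing from your plan.

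Second, your inequality (b), $\dim_{\mathbb{C}}\bigl(\mathcal{O}_{n,P}/J\bigr)\le D(J)^{n-1}/2^{n-2}$ for an arbitrary ideal with $V(J)=\{P\}$, is false: for $J=\mathfrak{m}^2$ in $\mathcal{O}_2$ one has $D(J)=2$ but $\dim\mathcal{O}_2/J=3>2$. The generic-Bezout bound for a general ideal only gives an exponent $n$; the exponent $n-1$ comes from the fact that one generator is $h$, with $dh\neq0$, which reduces matters to an ideal in $n-1$ variables, and the powers of $2$ come from the factor-two relation between the contact order of $r$ along a curve and the vanishing orders of the $f_j$ (which enter $r$ as squares), a step where pseudoconvexity is used to preclude cancellation. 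Relatedly, your final chain treats the passage $\Delta(b\Omega,P)\le D(I(P))$ and $D(I(Q))\le\Delta(b\Omega,Q)$ as exact ``up to universal factors,'' but these factors of $2$ are exactly what produce the constant $2^{n-2}$ in the conclusion, so they cannot be left untracked. Finally, semicontinuity of the codimension should be argued for the smoothly varying truncated family (finite-dimensional jet-space linear algebra, as in D'Angelo), not by appeal to coherence of a quotient sheaf, since the germs $h,f_j,g_j$ themselves change with the base point $P$.
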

The following proposition from D'Angelo \cite[p. 138]{D'A93} will serve useful for us later. It states that the order of vanishing of a holomorphic curve at a point in the boundary of a pseudoconvex domain is even or infinite.
\begin{prop}\label{prop:EvenOrderVanish}
Let $\Omega$ be a smoothly bounded pseudoconvex domain in $\mathbb{C}^n$ with a local defining function $r$ near $P\in b\Omega$.
Suppose that $\gamma$ is a parametrized holomorphic curve such that $\gamma(0)=P$ and $\ord(r \circ \gamma ) = m < \infty$.
Then, the order of vanishing $m$ is even and the coefficient of $\abs{\zeta}^m$ in $r \circ \gamma$ is positive.
\end{prop}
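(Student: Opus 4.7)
The plan is to use pseudoconvexity to obtain subharmonicity of (a modification of) $r \circ \gamma$ near $\zeta = 0$, and then extract structural information about the leading Taylor term via the sub-mean-value property.

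To begin, I would write the Taylor expansion
\[r \circ \gamma(\zeta) = \sum_{j+k \geq m} a_{jk}\, \zeta^j \bar\zeta^k, \qquad a_{jk} = \overline{a_{kj}}\]
(reality of $r \circ \gamma$), and isolate the leading homogeneous part $P_m(\zeta, \bar\zeta) = \sum_{j+k=m} a_{jk}\, \zeta^j \bar\zeta^k$, which is not identically zero by the hypothesis $\ord(r \circ \gamma) = m$.

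Next, pseudoconvexity would be used to set up a subharmonic auxiliary function. There are two standard routes: either replace $r$ by a locally equivalent plurisubharmonic defining function near $P$, or invoke the Diederich--Fornaess theorem that $-(-r)^\eta$ is plurisubharmonic on $\Omega$ for some small $\eta > 0$. Translating $\gamma$ slightly into $\Omega$ by $\gamma_\epsilon(\zeta) = \gamma(\zeta) - \epsilon\, \nuP$ ensures $\gamma_\epsilon(\zeta) \in \Omega$ on a small disc, so subharmonicity of $(r \circ \gamma_\epsilon)$ (up to a monotone transform) applies. The sub-mean-value inequality, followed by the limit $\epsilon \to 0^+$, then yields
\[0 \le \frac{1}{2\pi}\int_0^{2\pi} r \circ \gamma(s e^{i\theta})\, d\theta = \sum_{k \geq 0} a_{kk}\, s^{2k}\]
for all small $s > 0$, since the off-diagonal terms $\zeta^j\bar\zeta^k$ with $j \neq k$ integrate to zero over the circle.

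The right-hand side being $\geq 0$ for all small $s$ forces its lowest nonzero coefficient to be positive; call this index $k_0$, so $a_{k_0 k_0} > 0$ while $a_{kk} = 0$ for $k < k_0$. Clearly $2k_0 \ge m$. The remaining task is to show $2k_0 = m$, which gives simultaneously that $m$ is even and that $a_{m/2,\, m/2} > 0$. This amounts to ruling out the possibility that $P_m$ consists entirely of purely off-diagonal monomials $\zeta^j \bar\zeta^k$ with $j \neq k$; such a $P_m$ would be a non-trivial real polynomial of zero circular average, hence would take a strictly positive value on some ray $\zeta = t e^{i\theta_0}$ and a strictly negative value on another. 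Constructing a small analytic disc $\zeta \mapsto \gamma(\zeta) - \epsilon\, \nuP$ whose boundary circle sits inside $\Omega$ but whose image is forced across $b\Omega$ along the ray where $P_m > 0$, and invoking the Kontinuit\"atssatz characterization of pseudoconvexity, then produces the contradiction.

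The main obstacle is this last step: upgrading non-negativity of the diagonal coefficient to strict positivity. It requires a careful analytic-disc argument with quantitative bookkeeping of the error $O(|\zeta|^{m+1})$ against the leading behavior of $P_m$ and the normal shift $\epsilon$, so that the Hartogs extension/Kontinuit\"atssatz is applied at precisely the right scale $|\zeta| \approx \epsilon^{1/m}$. This balancing of scales is where pseudoconvexity is exploited most sharply, and it is the delicate piece that goes beyond the routine sub-mean-value averaging.
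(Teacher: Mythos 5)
The paper itself gives no proof of this proposition --- it is quoted from D'Angelo \cite{D'A93} --- so your attempt can only be measured against what is actually provable, and there it runs into trouble at both of its key steps. First, the averaging step: translating by $-\epsilon\,\nu_P$ puts $\gamma_\epsilon$ inside $\Omega$ only on a disc whose radius shrinks with $\epsilon$ (of size roughly $\epsilon^{1/m}$; already in the model case $r\circ\gamma(\zeta)=|\zeta|^2$ the original curve lies \emph{outside} $\overline{\Omega}$ for every $\zeta\neq 0$), so you cannot fix a circle $|\zeta|=s$, apply the sub-mean-value inequality there, and let $\epsilon\to 0^+$. Moreover, the subharmonic function that Diederich--Fornaess provides is $-(-r)^\eta$, and an average inequality for it does not convert into your displayed inequality $0\le\frac{1}{2\pi}\int_0^{2\pi}r\circ\gamma(se^{i\theta})\,d\theta$; that inequality, for fixed $s$ and an arbitrary local defining function, is in fact false (multiply a defining function by a positive factor such as $e^{\Im z_1}$ in the example below and the average becomes $\approx -s^4/8$). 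Your alternative route, a locally equivalent plurisubharmonic defining function, need not exist for smooth pseudoconvex domains, so it cannot be used as a fallback.

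The decisive gap, however, is the step you yourself flag as delicate: ruling out a purely off-diagonal (harmonic) leading part $P_m$ is not possible from pseudoconvexity and finite order of vanishing alone, because the configuration you are trying to contradict actually occurs. Take $r(z)=\Re\!\left(z_2+z_1^2\right)$; the hypersurface $\{r=0\}$ bounds a pseudoconvex domain (it is the image of a half-space under the polynomial automorphism $(z_1,z_2)\mapsto(z_1,z_2-z_1^2)$, and a smooth bounded convex domain with a flat boundary piece in $\{\Re w_2=0\}$ pulls back to a smooth bounded pseudoconvex domain whose boundary near $0$ is exactly this hypersurface). With $\gamma(\zeta)=(\zeta,0)$ one gets $r\circ\gamma(\zeta)=\Re\zeta^2$: the order is $m=2<\infty$, yet every diagonal coefficient $a_{kk}$ vanishes, so your index $k_0$ does not exist and the coefficient of $|\zeta|^2$ is $0$, not positive; replacing $z_1^2$ by $z_1^3$ even destroys the evenness of $m$. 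Consistently with this, the Kontinuit\"atssatz contradiction you hope to manufacture is unavailable: when $P_m$ changes sign, part of every boundary circle of the shifted disc lies where $r>0$, i.e.\ outside $\Omega$, so there is no family of analytic discs with boundaries in a compact subset of $\Omega$ whose interiors reach $b\Omega$, and pseudoconvexity is not violated. In short, no balancing of the scales $|\zeta|\approx\epsilon^{1/m}$ can complete your outline; the result requires the additional hypotheses under which D'Angelo proves it (structure eliminating the pure, i.e.\ harmonic, terms of $r\circ\gamma$), and any proof --- as well as the way the statement is invoked in Proposition \ref{prop:RS_Props}(c) --- must engage precisely that point rather than pseudoconvexity in the raw form you use.
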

Using the above result it can be seen that the type of a point in the boundary of a pseudoconvex domain is either even or infinite. This holds for regular type as well.

\section{Main Theorem}\label{sec:MainThm}

Let $\Omega$ be a smoothly bounded pseudoconvex domain in $\mathbb{C}^n$, $n>1$, with finite regular type at $P\in b\Omega$.
Suppose $r$ is a local defining function of $\Omega$ in a neighbourhood $U$ of $P$.
We make the following choices and normalizations.
Restrict $U$ if necessary so that we have a choice of coordinates $z$ in $U$ such that $z(P)=0$, the outward unit normal $\nuP(b\Omega)=(0,\ldots,0,1)$, and $b\Omega \cap U$ is the graph of a smooth function $-h$ defined in a neighbourhood of $0\in\mathbb{C}^{n-1} \times \mathbb{R}$ with $h(0)=0$ and $\nabla h (0) = 0$. i.e., $r(z) = \Re z_n + h(z_1,\ldots,z_{n-1},\Im z_n)$.

We begin by showing that the functions $R$ and $S$ (see Definition \ref{defn:RS}) are inverse functions and their behaviour is closely tied to the regular type of $b\Omega$ at $P$.
Since the Lipschitz gain is captured by the function $S$, we wish to draw the reader's attention to its relationship with $\RTyp(b\Omega, P)$.

\begin{prop}\label{prop:RS_Props} Let $\Omega$, $P$, $z$, and $\boldsymbol{v}$ be as above. Then,
\begin{enumerate}[(a)]\itemsep1ex
\item $R(t)$ and $S(t)$ are non-decreasing for small $t>0$,
\item\label{prop:RS_PropsB} $R(S(t))=t$ for small $t>0$, and
\item there exists a positive even integer $k_0$ such that the limits$\displaystyle\lim\limits_{\hphantom{^+}t \to 0^+}\, \frac{S(t)}{t^{k_0}}$ and, hence,$\displaystyle\lim\limits_{\hphantom{^+}t\to 0^+}\, \frac{R(t)}{t^{1/k_0}}$ exist and are positive.\smallskip
\end{enumerate}
\end{prop}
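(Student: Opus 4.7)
The plan is to tackle (c) first, from which (a) is immediate and (b) follows with additional work. Set $\phi(\zeta) := r(P + \zeta\boldsymbol{v})$. In the normalized coordinates with $r = \Re z_n + h(z', \Im z_n)$, since $(P_t)_n = -t$ and $v_n = 0$, one computes $r(P_t + \zeta\boldsymbol{v}) = -t + \phi(\zeta)$ exactly, so $P_t + \zeta\boldsymbol{v} \in \Omega \cap U$ iff $\phi(\zeta) < t$ (for $t, \rho$ small) and
\[S(t) = \sup_{|\zeta|\le t}\phi(\zeta), \qquad R(t) = \sup\{\rho > 0 : S(\rho) < t\}.\]
Part (a) is then immediate: enlarging $t$ enlarges the relevant set in either definition.

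For (c), the non-singular holomorphic curve $\gamma(\zeta) = P + \zeta\boldsymbol{v}$ cannot lie in $b\Omega$ (finite regular type at $P$), so $\phi = r\circ\gamma$ has finite order of vanishing $k_0 \le \RTyp(b\Omega, P)$ at $0$. Proposition \ref{prop:EvenOrderVanish} gives $k_0$ even and Taylor coefficient $c_{k_0/2, k_0/2} > 0$; pseudoconvexity moreover forces the degree-$k_0$ homogeneous part $\psi_{k_0}$ of $\phi$'s Taylor expansion to be subharmonic in $\zeta$ (as in the proof of Proposition \ref{prop:EvenOrderVanish}). Writing $\phi = \psi_{k_0} + E$ with $|E(\zeta)|\le C|\zeta|^{k_0+1}$ and $M := \max_{|\zeta|=1}\psi_{k_0}$, the circular average of $\psi_{k_0}$ equals $c_{k_0/2,k_0/2} > 0$, so $M > 0$. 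Subharmonicity plus homogeneity give $\sup_{|\zeta|\le\rho}\psi_{k_0} = M\rho^{k_0}$, and the error bound yields
\[S(\rho) = M\rho^{k_0} + O(\rho^{k_0+1}), \qquad \lim_{\rho\to 0^+}\frac{S(\rho)}{\rho^{k_0}} = M > 0.\]

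The heart is (b). Continuity of $S$ (sup of a continuous function over varying compact sets) immediately yields $S(R(t)) = t$. The reverse $R(S(t)) = t$ requires $S$ to be strictly increasing near $0$. The key step (Claim A) is that for $\rho$ small, the maximum of $\phi$ on $\overline{B(0,\rho)}$ is attained only on the circle $|\zeta|=\rho$. I would prove this by contradiction: if $\rho_n \to 0$ admitted interior maximizers $\zeta_n$, rescale via $\omega_n := \zeta_n/\rho_n$ and $\tilde\phi_{\rho_n}(\omega) := \phi(\rho_n\omega)/\rho_n^{k_0}$; the smoothness of $\phi$ and homogeneity of $\psi_{k_0}$ give $\tilde\phi_{\rho_n} \to \psi_{k_0}$ in $C^1$ on $\overline{B(0,1)}$. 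Interior maximality gives $\nabla\tilde\phi_{\rho_n}(\omega_n) = 0$ and $\tilde\phi_{\rho_n}(\omega_n) \to M$, so a subsequential limit $\omega^*$ satisfies $\psi_{k_0}(\omega^*) = M$ and $\nabla\psi_{k_0}(\omega^*) = 0$. The subharmonic maximum principle forces $|\omega^*|=1$, but Euler's identity for the $k_0$-homogeneous $\psi_{k_0}$ gives $\omega^*\cdot\nabla\psi_{k_0}(\omega^*) = k_0 M > 0$, contradicting $\nabla\psi_{k_0}(\omega^*) = 0$. Given Claim A, $S(\rho) = \max_{|\zeta|=\rho}\phi$ for small $\rho$; $S$ is Lipschitz, hence differentiable a.e., and at differentiability points $dS/d\rho$ equals the radial derivative of $\phi$ at an arg-max, which is $k_0 M \rho^{k_0-1}(1+o(1)) > 0$. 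Thus $S$ is strictly increasing near $0$, so $R$ is its inverse and $R(S(t)) = t$. The $R$-asymptotic in (c) follows by substituting $s = S(t)$ into $R(S(t)) = t$: $R(s) = t = (s/M)^{1/k_0}(1+o(1))$, so $R(s)/s^{1/k_0} \to M^{-1/k_0}$.

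The principal obstacle is Claim A, which pins the maximum of $\phi$ on small discs to the bounding circle; it is precisely where pseudoconvexity enters, through subharmonicity of the homogeneous model $\psi_{k_0}$, and cooperates with Euler's identity to force all small-scale maxima of $\phi$ outward.
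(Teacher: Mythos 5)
Your proof is correct, and while its skeleton matches the paper's (reduce everything to $\phi(\zeta)=r(P+\zeta\boldsymbol v)$ via the normalization $r(P_t+\zeta\boldsymbol v)=-t+\phi(\zeta)$, express $R(t)=\sup\{\rho<\rho_U : S(\rho)<t\}$, and invoke Proposition \ref{prop:EvenOrderVanish} for evenness and positivity of the $|\zeta|^{k_0}$ coefficient), you are more careful at exactly the two places where the paper is terse. For (c), the paper asserts that $\lim_{|\zeta|\to 0} r(P+\zeta\boldsymbol v)/|\zeta|^{k_0}$ itself exists and is positive; that is not literally true in general, since the leading homogeneous part $\psi_{k_0}$ need not be radial (e.g.\ $\psi_4(\zeta)=|\zeta|^4+t\,|\zeta|^2\Re\zeta^2$ is subharmonic for $|t|\le 4/3$ but direction-dependent). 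Your version --- $S(\rho)/\rho^{k_0}\to M:=\max_{|\zeta|=1}\psi_{k_0}>0$, with $M>0$ because the circular mean of $\psi_{k_0}$ is the positive coefficient $c_{k_0/2,k_0/2}$ --- is the statement that is actually needed and proved correctly. For (b), you rightly observe that continuity of $S$ alone gives only $S(R(t))=t$, and that $R(S(t))=t$ fails on any interval of constancy of $S$; the paper does not address strict monotonicity, and your rescaling argument (Claim A: interior maximizers would produce a critical point $\omega^*$ of $\psi_{k_0}$ with $\psi_{k_0}(\omega^*)=M$, contradicting Euler's identity) closes this. Two small remarks: the subharmonicity of $\psi_{k_0}$, which you attribute to the proof of Proposition \ref{prop:EvenOrderVanish}, is true for the line $\zeta\mapsto P+\zeta\boldsymbol v$ (by a weighted-rescaling argument producing a pseudoconvex rigid model $\Re z_n+\psi_{k_0}(z_1)$), but you do not actually need it --- if $|\omega^*|<1$ and $\omega^*\ne 0$, homogeneity gives $\psi_{k_0}(\omega^*/|\omega^*|)=M/|\omega^*|^{k_0}>M$, already a contradiction; and your final a.e.-differentiability step is redundant, since Claim A directly yields strict monotonicity (if $S(\rho)=S(\rho')$ with $\rho<\rho'$, the maximum over the $\rho'$-disc is attained at an interior point of that disc).
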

\begin{proof} Let $r$ be a local defining function of $\Omega$ near $P\in b\Omega$ that is normalized as above.
\begin{enumerate}[(a)]
\item $S$ is non-decreasing since $\displaystyle S(t) = \sup\left\{h\left(\zeta\boldsymbol{v}\right)\,:\, \zeta\in\mathbb{C},\,\abs{\zeta}\:\le\: t \right\}$ and $h(z',0) \ge 0$, for $z'$ near $0$, by Proposition \ref{prop:EvenOrderVanish}.
To conclude that $R$ is non-decreasing, notice that $\exists\,\rho_{_U} > 0$ such that for small $t>0$, 
\[R(t) = \sup\left\{0 < \rho < \rho_{_U} \,:\, S(\rho) < t\right\}.\]
\item Since $h$ is smooth, $S$ is continuous. Hence, (b) follows from the observation above.
\item For $\zeta\in\mathbb{C}$, $P+\zeta\boldsymbol{v}$ is a non-singular holomorphic curve.
Since $\RTyp(b\Omega,p) < \infty$, by Proposition \ref{prop:EvenOrderVanish}, there exists a positive even integer $k_0 \le  \RTyp(b\Omega,p)$ such that
\begin{align*}
\lim\limits_{\abs{\zeta}\to 0}\, \frac{r(P+\zeta\boldsymbol{v})}{\abs{\zeta}^k} = 0 \text{ for } 0\le k < k_0
\intertext{and}
\lim\limits_{\abs{\zeta}\to 0}\, \frac{r(P+\zeta\boldsymbol{v})}{\abs{\zeta}^{k_0}} \text{ exists and is positive.}
\end{align*}
Hence,$\displaystyle\lim\limits_{ \hphantom{^+}t\to 0^+}\, \frac{S(t)}{t^{k_0}}$ and$\displaystyle \lim\limits_{\hphantom{^+}t \to 0^+}\, \frac{R(t)}{t^{1/k_0}}$ exist and are positive.
\end{enumerate}
\qed\end{proof}
We should note that the $k_0$ in the above result depends on $P$, $z$, and $\boldsymbol{v}$. We now state and prove our main theorem.

\begin{mainthm} Let $\Omega$ be a smoothly bounded pseudoconvex domain in $\mathbb{C}^n$, $n>1$. Suppose $b\Omega$ has finite regular type at $P\in b\Omega$. If $f\in\mathcal{O}(\Omega)\cap\Lip_\alpha(\Omega)$ for $0 < \alpha < 1/\RTyp\left(b\Omega, P\right)$, then 
\[\abs{f\!\left(P_\delta\right) - f(w)} \, \lesssim\, S(\abs{P_\delta-w})^\alpha\]
for $v\in\mathds{T}_{P}(b\Omega)$, small $\delta>0$, and $w\in\mathbb{D}(\delta)$.
\end{mainthm}
Before we proceed to the proof we would like to note that the only dependence of the constant in the above inequality on $f$ is on $\left\|f\right\|_\alpha$. It also depends on the local geometry of $b\Omega$ near $P$ and can be chosen to be independent of $\boldsymbol v$ and the choice of coordinates.
Furthermore, if the regular type of $b\Omega$ is bounded in a neighbourhood of $P$, we can choose the constant to be independent of $P$ as well.
\begin{proof}
We push $P_\delta$ and $w$ away from the boundary and estimate the change of $f$ along the three edges of the rectangle formed by these four points (see Figure \ref{fig:BoxLipGain} below).
Let $h=S(\abs{P_\delta-w})$ be the distance by which $P_\delta$ and $w$ are pushed into $\Omega$ in the $\nuP$ direction.
Let $z_t=z-t\nu_P$ for $z$ near $P$ and small $t>0$.
Now, 
\[\abs{f(P_\delta)-f(w)} \le \underbrace{\strut \abs{f(P_\delta)-f(P_{\delta +h})}}_{\textit{I}} + \underbrace{\strut \abs{f(P_{\delta +h})-f(w_h)}}_{\textit{II}} + \underbrace{\strut \abs{f(w_h)-f(w)}}_{\textit{III}}.\]
The estimates for \textit{I} and \textit{III} are straightforward. The estimate for \textit{II} contains the interesting elements of the proof. Without loss of generality let us suppose that $\left\|f\right\|_{\alpha} \le 1$.

\begin{figure}[h!]
\centering 
\includegraphics[width=0.55\textwidth]{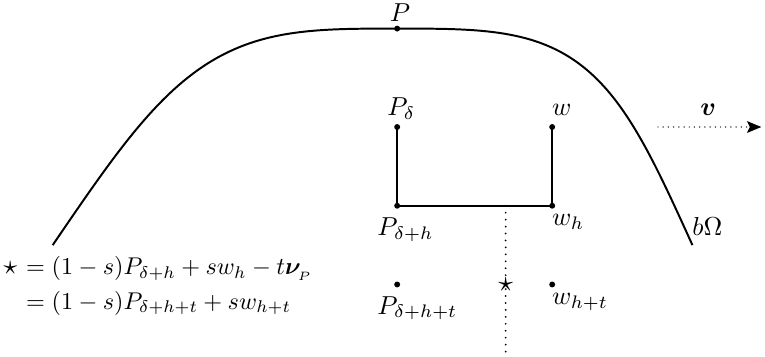}
\caption{Box argument - Lipschitz gain}
\label{fig:BoxLipGain}
\end{figure}

To estimate \textit{I}, notice that we have, by Lemma \ref{lemma:DerivLipHolo},
\begin{gather*}
\abs{\frac{\partial f}{\partial \nuP}(P_{\delta+s})} \le (\delta + s)^{-1+\alpha} \le s^{-1+\alpha},\ \text{ for small } s>0,
\intertext{and hence,}
\textit{I} = \abs{f(P_\delta)-f(P_{\delta +h})} \le \int\limits_0^h\, \abs{\frac{\partial f}{\partial \nuP}(P_{\delta+s})}\, \text{d}s \le \int\limits_0^h\, s^{-1+\alpha}\, \text{d}s \,\lesssim\, h^\alpha = S(\abs{P_\delta-w})^\alpha.
\end{gather*}
Since $\nuP$ is transversal to $b\Omega$ at $\pi(z)$, by Lemma \ref{lemma:TransDistEst}, we have $\dist_{b\Omega}(w_s) \,\gtrsim\, s$. So,
\[\abs{\frac{\partial f}{\partial \nuP}(w_s)} \,\lesssim\, s^{-1+\alpha}.\]
Note that the constant in the above inequality depends only on the geometry of $b\Omega$ near $P$. We estimate \textit{III} similarly to get
\[\displaystyle \textit{III} =  \abs{f(w_h)-f(w)} \,\lesssim\, S(\abs{P_\delta-w})^\alpha.\]

We estimate \textit{II} by using the mean value theorem.
A direct approach would involve estimating $\partial f/\partial \boldsymbol{v}$ using Lemma \ref{lemma:DerivLipHolo}\,\eqref{lemma:DerivLipHoloA}. But, this yields a weak estimate that is not quite sufficient for our purposes. To obtain a better estimate, we first estimate $\partial^2 f/(\partial \nuP\,\partial \boldsymbol{v})$ using Lemma 
\ref{lemma:DerivLipHolo}\,\eqref{lemma:DerivLipHoloB} and then integrate it in the $\nuP$ direction.
To execute this plan, we need good estimates on the size of discs that can be fit in the $\nuP$ and 
$\boldsymbol{v}$ directions at points on the dotted line in Figure \ref{fig:BoxLipGain}; a generic point being denoted by 
$\star=(1-s)P_{\delta+h+t} + sw_{h+t}$.

Let $0\le s \le 1$ and $t>0$ be sufficiently small. Since  $\dist_{b\Omega}(w_{h+t})\,\gtrsim\, h+t$  by Lemma \ref{lemma:TransDistEst}, we can fit a disc of radius $\,\gtrsim\, h+t$ centred at $\star$ in the $\nuP$ direction inside $\Omega$.
Also, a disc of radius $R(\delta+h+t) - \abs{P_\delta -w}$ centred at $\star$ can be fit inside $\Omega$ in the $\boldsymbol{v}$ direction.
To obtain the desired estimates, we now show that $R(\delta+h+t) - \abs{P_\delta - w} \,\gtrsim\, R(h+t)$.

By Proposition \ref{prop:RS_Props} we know that there exists a positive even integer $k_0 \le \RTyp(b\Omega,P)$ and $c>0$ such that$\displaystyle\lim\limits_{\hphantom{^+}u\to 0^+}\, {R(u)}/{u^{1/k_0}} = c$.
Choose $\eta > 0$ so that 
\[2\left(\frac{c-\eta}{c+\eta}\right) > 1+ \left(\frac{2}{3}\right)^{1/k_0}.\]
Then, $\exists\, \delta_0 > 0$ such that
\begin{equation}\label{eqn:R_Type}
c-\eta \,\le\, \frac{R(u)}{u^{1/k_0}} \,\le\, c + \eta \quad \text{ for } 0 < u \le 3\delta_0.
\end{equation}
Make $\delta_0$ smaller, if necessary, so that $z_{3\delta_0}\in\Omega$ for $z\in U\cap\Omega$ where $U$ is a sufficiently small neighbourhood of $P$.
Now, for $0 < v \le \delta_0$ and $v/2 \le x \le 2\delta_0$, we have
\[\frac{R(v+x) - R(v)}{R(x)} \ge \frac{(c-\eta)(v + x)^{1/k_0} - (c+\eta)v^{1/k_0}}{(c+\eta) x^{1/k_0}} = \left(\frac{c-\eta}{c+\eta}\right)\left(\frac{v}{x} + 1\right)^{1/k_0} - \left(\frac{v}{x}\right)^{1/k_0}.\]
Since $\displaystyle \left(\frac{c-\eta}{c+\eta}\right)\left(\bullet + 1\right)^{1/k_0} - \bullet^{1/k_0}$ is decreasing on $0\le \bullet \le 2$,\smallskip
\begin{equation}\label{eqn:R_Est}
\frac{R(v+x) - R(v)}{R(x)}  \ge \left(\frac{c-\eta}{c+\eta}\right)\cdot 3^{1/k_0} - 2^{1/k_0} > \frac{1}{2}\left(3^{1/k_0} - 2^{1/k_0}\right) >0.\smallskip
\end{equation}
By the hypothesis on $w$, $0 < \abs{P_\delta -w} < R(\delta)$. Since $h=S\left(\abs{P_\delta -w}\right)$ and $S$ is the inverse of $R$, we have $0 < h < \delta$.  So, $\exists\, n\in\mathbb{N}$ such that
\[\frac{\delta}{2^n} \,\le\, h \,\le\, \frac{\delta}{2^{n-1}} \,\le\, \delta_0 \quad\text{and}\quad \frac{\delta}{2^n} \,\le\, h + t \,\le\, 2\delta_0.\]
Hence,
\begin{align}
   R(\delta+h+t) - \abs{P_\delta-w} & \ge R(\delta+h+t) - R\left(\frac{\delta}{2^{n-1}}\right) \notag\\
   &\ge R\left(\frac{\delta}{2^{n-1}}+h+t\right) - R\left(\frac{\delta}{2^{n-1}}\right) \notag\\
   & \,\gtrsim\, R(h+t)  \qquad\qquad\quad\ (\text{by } \eqref{eqn:R_Est}).\label{eqn:WeakSuperAdd}
\end{align}
The constant in the above inequality depends only on the geometry of $b\Omega$ near $P$.
So, by Lemma \ref{lemma:DerivLipHolo},
\[\abs{\frac{\partial^2 f}{\partial \nuP\, \partial \boldsymbol{v}}\left(\star\right)} \,\lesssim\, \frac{(h+t)^\alpha}{(h+t)R(h+t)}.\]
Now, we integrate the above estimate in the $\nuP$ direction to obtain an estimate on $\partial f/\partial \boldsymbol{v}$.
\begin{multline}
\abs{\frac{\partial f}{\partial \boldsymbol{v}}\left((1-s)P_{\delta+h} + sw_{h}\right)} \le \abs{\frac{\partial f}{\partial \boldsymbol{v}}\left((1-s)P_{\delta+h+\delta_0} + sw_{h+\delta_0}\right)}\\
{+ \int\limits_0^{\delta_0}\, \abs{\frac{\partial^2 f}{\partial \nuP\, \partial \boldsymbol{v}}\left((1-s)P_{\delta+h+t} + sw_{h+t}\right)}\, \text{d}t} \label{eqn:IntInNuP}
\end{multline}
By Lemma \ref{lemma:DerivLipHolo} and \eqref{eqn:WeakSuperAdd}, we have
\begin{align*} 
&\abs{\frac{\partial f}{\partial \boldsymbol{v}}\left((1-s)P_{\delta+h+\delta_0} + sw_{h+\delta_0}\right)}  \,\lesssim\, \delta_0^{-1+\alpha} \quad \text{ and }\\[3mm]
&\abs{\frac{\partial^2 f}{\partial \nuP\, \partial \boldsymbol{v}}\left((1-s)P_{\delta+h+t} + sw_{h+t}\right)} \,\lesssim\, \frac{(h+t)^\alpha}{(h+t)R(h+t)}.
\end{align*}
Using these estimates and \eqref{eqn:R_Type} in \eqref{eqn:IntInNuP}, we get
\[
\abs{\frac{\partial f}{\partial \boldsymbol{v}}\left((1-s)P_{\delta+h} + sw_{h}\right)} \,\lesssim\, \delta_0^{-1+\alpha}  + \int\limits_0^{\delta_0}\, \frac{(h+t)^\alpha}{(h+t)(h+t)^{1/k_0}}\, \text{d}t\,\lesssim\, h^{\alpha-1/k_0} \,\lesssim\, \frac{h^\alpha}{R(h)}.
\]
Note that the only dependence of the constant in the last inequality on $f$ is on $\left\|f\right\|_\alpha$.
Now, \textit{II} is estimated as follows;
\begin{align*}
\textit{II} &= \mathrlap{\abs{f(P_{\delta +h})-f(w_h)} \le\ \sup\limits_{0\le s\le 1}\, \abs{\frac{\partial f}{\partial \boldsymbol{v}}\left((1-s)P_{\delta+h} + sw_{h}\right)} \cdot \abs{P_{\delta +h} - w_h}}\\[1ex]
&\,\lesssim\, \frac{h^\alpha}{R(h)} \cdot \abs{P_{\delta} - w} = \frac{S(\abs{P_{\delta} - w})^\alpha}{\abs{P_{\delta} - w}} \cdot \abs{P_{\delta} - w} &  (\text{by Proposition \ref{prop:RS_Props}\,\eqref{prop:RS_PropsB}})\\[1ex] 
&= S(\abs{P_{\delta} - w})^\alpha.
\end{align*}
\end{proof}

\section*{Acknowledgements}
I am deeply grateful to Jeffery McNeal for introducing me to this beautiful part of several complex variables, the many discussions regarding this project, and for his generosity and guidance over the years.
I started working on this project as part of my Ph.D. thesis \cite{Rav11} under his guidance at The Ohio State University and the results reported here were obtained during my stay at the Institute for Advanced Study.
I wish to thank the Institute for Advanced Study for their hospitality.
I also wish to thank Ji\v{r}\'{\i} Lebl for insightful discussions regarding this project and Yunus Zeytuncu for his comments on a draft of this article.

\bibliographystyle{amsplain}

\end{document}